\def\Xint#1{\mathchoice
{\XXint\displaystyle\textstyle{#1}}%
{\XXint\textstyle\scriptstyle{#1}}%
{\XXint\scriptstyle\scriptscriptstyle{#1}}%
{\XXint\scriptscriptstyle\scriptscriptstyle{#1}}%
\!\int}
\def\XXint#1#2#3{{\setbox0=\hbox{$#1{#2#3}{\int}$ }
\vcenter{\hbox{$#2#3$ }}\kern-.6\wd0}}
\def\dashint{\Xint-}
\newtheorem{lemma}{Lemma}
\newtheorem{remark}{Remark}
\newtheoremstyle{theostyle}
  {7mm}
  {7mm}
  {\slshape}
  {10pt}
  {\bfseries}
  {:}
  {3pt}
  {}
\theoremstyle{theostyle}
\newtheorem{theorem}{Theorem}
\newcommand{\grad}{\nabla}
\newcommand{\xb}{\bar{x}}
\newcommand{\tb}{\bar{t}}
\newcommand{\fb}{\bar{f}}
\renewcommand{\div}{\,\text{div}\,}
\newcommand{\rb}{\overline \bar{\rho}}
\newcommand{\T}{\mathbb{T}}
\newcommand{\lb}{\llbracket}
\renewcommand{\rb}{\rrbracket}
\renewcommand{\i}{\mbox{\rm i}}
\newcommand{\e}{\epsilon}
\renewcommand{\bar}{\overline}
\numberwithin{equation}{section}
\begin{document}

\title{Existence of weak solutions for the kinetic models of motion of myxobacteria with alignment and reversals}

\begin{abstract}{In this paper, we consider three non-linear kinetic partial differential equations that emerge in the modeling of motion of rod-shaped cells such as myxobacteria. This motion is characterized by nematic alignment with neighboring cells, orientation reversals from cell polarity switching, orientation diffusion, and transport driven by chemotaxis. Our primary contribution lies in establishing the existence of weak solutions for these equations. Our analytical approach is based on the application of the classical averaging lemma from the kinetic theory, augmented by a novel version where the transport operator is substituted with a uni-directional diffusion operator.}
\end{abstract}

\author{Patrick Murphy}
\address{Department of Mathematics and Statistics\\
San Jose State University}

\author{Oleg Igoshin}
\address{Department of Bioengineering\\
Rice University}

\author{Misha Perepelitsa}
\address{Department of Mathematics\\
University of Houston}
\email{mperepel@central.uh.edu}

\author{Ilya Timofeyev}

\address{Department of Mathematics\\
University of Houston}

\date{\today}

\maketitle

\section{Introduction}
Many rod-shaped bacteria are known to move on surfaces and self-organize into various spatial patterns due to mechanical interactions and biochemical signaling between cells. For instance, myxobacteria - bacteria studied for their multicellular interactions can form dynamic patterns of clusters, streams, swirls, and aggregates.
Motivated by the problem of modeling the motion of a single layer of myxobacteria on a flat surface, we consider a mean-filed equation for $f(x,\theta,t)$-the probability distribution functions for bacteria position $x\in\mathbb{R}^2,$ and orientation angle of bacteria longer axis, $\theta\in(-\pi,\pi]:$
\begin{equation}
\label{intro:FK1}
\partial_t f{}+{}e(\theta)\cdot\grad f{}=-{}\partial_{\theta}(f\Psi( v_f)){}+{}\partial^2_{\theta\theta}f -f + \tau_{\pi}f,
\end{equation}
where $e(\theta)=(\cos\theta,\sin\theta),$ as bacteria primary move along their long axis.
\begin{equation}
\label{intro:vf}
v_f(x,t){}={}\int_{-\pi}^{\pi}f(x,\theta_1,t)\sin(2(\theta_1-\theta))\,d\theta_1,
\end{equation}
 $\Psi$ is Lipschitz continuous and bounded, and
$\tau_\pi$ is the translation operator in $\theta$ variable:
\[
\tau_\pi f(x,\theta,t){}={}f(x,\theta+\pi,t),
\]  
The right-hand side of equation \eqref{intro:FK1}
accounts for changes in the angle (passive part) that combine nematic alignment, orientation diffusion, and cell reversals to the opposite direction.  The left-hand side represents the active part: transport in the direction of orientation of the cell's longer axis. The model is based on the kinetic model of Peruani et al. \cite{Peruani2008} for the motion of self-propelled rods with nematic alignment described by Maier-Saupe interaction potential from the theory of liquid crystals.

Alongside equation \eqref{intro:FK1} we consider (i) a kinetic model obtained in the asymptotic ``fast reversal" regime (diffusion limit)
and (ii) a model where the reversals are biased by a concentration of local field $s$, e.g. chemotactic signal.
In the first  case, the kinetic equation is
\begin{equation}
\label{intro:FK2}
\partial_t f{}-{}\div \left(e(\theta)\otimes e(\theta)\grad f\right){}=-{}\partial_\theta(f \Psi(v_f)) {}+{}\partial^2_{\theta\theta}f.
\end{equation}
In this equation, the ``active" part is the spacial diffusion, with the diffusion matrix  $e(\theta)\otimes e(\theta),$ which results in diffusion only in the direction of $e(\theta)$  In this model, the kinetic function $f$ is nematically symmetric: $\tau_\pi f{}={}f.$

In the second case, with $s(x,t)$ denoting the concentration of a chemoattractant, the equation is given by
\begin{equation}
\label{intro:FK3}
\partial_t f + 2\div(  [e(\theta)\cdot\grad s] e(\theta) f){}-{}\div (e(\theta)\otimes e(\theta)\grad f){} =- {}\partial_\theta(f \Psi(v_f)) {}+{}\partial^2_{\theta\theta}f.
\end{equation} As in the classical Keller-Segel model, chemotaxis is expressed as a transport term to the ``active" part of the kinetic equation.

In this paper, we focus on proving the global (in time) existence of weak solutions to Cauchy problem for equations \eqref{intro:FK1}, \eqref{intro:FK2}, and \eqref{intro:FK3} (with given $s(x,t)$), under the assumption that the rate of alignment $\Psi(v_f)$ is a bounded function. 

The equations listed above have unique classical solutions locally in time for any $\Psi\in C^1(\mathbb{R}).$ This can be proved by, for example, by a suitable operator-splitting method. It is, however, unknown if and how the classical solutions blow up in finite time. Such analysis is complicated by the fact that the only known Lyapunov functional for these equations is the total mass. On the other hand, when the rate of alignment is bounded,
the orientational diffusion is sufficient to control the alignment term and there is \textit {a priori} estimate of solutions in $L^2$ norm (equation \eqref{eq:energy1}). This is the case we consider in this paper and the weak solutions we construct have the regularity ascertained by such ``energy" estimate.

The main difficulty of constructing weak solutions is the nonlinearity of the alignment in which the interaction operator $v_f$ is localized to a point. This situation arises in an asymptotic regime of the vanishing ratio of cell length $l$ to a macroscopic scale $L$ (domain size) because the orientational alignment occurs when the cells collide, and thus its range is proportional to $l,$ see section \ref{sec:derive}. This property makes the equations similar to the Boltzmann and Bhatnagar-Gross-Krook (BGK) equations of gas dynamics, where the interaction operators are also localized. Consequently, we look for the analytical approaches developed for such equations and try to apply them here. The key tool in our analysis is the theory kinetic averaging (DiPerna and Lions \cite{Diperna1989OnTC},  Perthame \cite{PERTHAME1989191}, Gerard \cite{Gerard1991}, Golse et al\cite{Golse1988}, Bouchut et al \cite{BouchutBook}). 

Proving the existence of weak solutions requires showing that one can pass to the limit in the non-linear term $f_\e \Psi(v_{f_\e})$ on a sequence of suitable approximate solutions $f_\e.$ Using a kinetic averaging lemma of  Gerard \cite{Gerard1991} we are able to show the strong compactness of moments $v_{f_\e}$ and with that the existence of weak solutions to \eqref{intro:FK1}. The situation is slightly different with equations \eqref{intro:FK2} and \eqref{intro:FK3} because the kinetic density propagates in space by a diffusion process. However, since the direction of the diffusion $e(\theta)$ varies, we still expect improved regularity for $\theta$-moments of $f$ compared to the regularity of $f.$ This we prove in lemma \ref{lemma:average} by showing that moments belong to a fractional Sobolev space $H^{\frac{2}{7},\frac{1}{7}}_{x,t},$  locally in $(x,t)\in\mathbb{R}^2\times\mathbb{R}_+.$ Such regularity implies the strong compactness of moments and the existence of weak solutions.

Kinetic models for the motion of myxobacteria, or more generally the models of self-propelled rods with alignment,  have been considered in the framework of mean-field type interactions \cite{baskaran08, baskaran08a, degond17, degond18, Frouvelle2012} and in the Boltzmann framework \cite{Bertin2006, Hittmeir1, perepelitsa2022, murphy2023}. Most works deal with the formal derivation of macroscopic equations in various asymptotic regimes, such as singular relaxation limits. Resulting equations are typically difficult to treat analytically. For example, in some models, the macroscopic equations turn out to be hyperbolic systems in non-conservative form, \cite{degond17, Hittmeir1, murphy2023}. In these works, the mathematical analysis has been mostly restricted to space homogeneous equations, addressing questions such as the existence of solutions, phase transition, and convergence to a stationary solution in the long run. 

Finally, we mention that the interaction operators similar to \eqref{intro:vf} appear in Smoluchowski equation (Doi \cite{Doi1981}) from the theory of liquid crystals and in the Kuramoto model of interacting oscillators (Kuramoto \cite{Kuramoto1975}).

\section{Models of cell motion with nematic alignment and reversals}
\subsection{Model I: liquid crystal-type alignment and reversals}
\label{sec:derive}
We start with the agent-based model of the motion of self-propelled rods (rod-shape cells) that alignment describes as in Peruani et al. \cite{Peruani2008}. The alignment part of the model describes the change of orientation of rods according to Maier-Saupe of alignment of liquid crystals. The model is  described by the equations for positions $X_i$ and orientations $\Theta_i$ of N cells: 
\begin{align}
\frac{dX_i}{dt}{}&={}v_0e(\theta_i),\quad e(\theta)=(\cos\theta,\sin\theta),\;v_0>0,\\
\label{eq:align}
\frac{d\Theta_i}{dt}{}&={} \gamma \sum_{j\,:\,|X_j-X_i|<r}\sin(2(\Theta_j-\Theta_i)){}+{}\eta_i(t),
\end{align}
$i=1\dots,N.$ Here, $\gamma>0,$  and $\eta_i(t)$ are stochastic processes such that: (i)  $\eta_i(t)$ is uniformly distributed  on interval $[-s,s],$ where $s=\sqrt{3}\sigma,$ with $\sigma>0;$ (ii)  for all $i,j$ and $t,t',$
\[
\langle \eta_i(t)\rangle{}={}0,\quad \langle \eta_i(t)\eta_j(t')\rangle = \sigma^2\delta_{i,j}\delta_{t,t'}.
\] 
For myxobacteria, which are known to periodically change their polarity and move in the opposite direction, We add cell reversals to the model, assuming that in addition to the continuous variation according to the above equations,  each $\Theta_i$ jumps to value $\Theta_i+\pi,$ with the timing of changes following independent Poisson's processes with rate $\lambda.$ 

The summation term in the rate of alignment in \eqref{eq:align} has a natural bound based on the fact that cells occupy a finite space and cells (typically) do not overlap. To account for this, we introduce the following modification. Let  $l$ and $w$ denote cells typical length and width, then
\[
| 
\sum_{j\,:\,|X_j-X_i|<r}\sin(2(\Theta_j-\Theta_i)) |{}\leq{}\frac{\pi r^2}{lw},
\]
This inequality expresses the fact that the left-hand side can not be larger than the number of cells inside a ball of radius $r$ when cells do not overlap. This is a common experimental condition. We, therefore, introduce a cutoff function
\[
\llbracket r\rrbracket_{a}=\left\{
\begin{array}{rr}
-a, & r<-a,\\
r, & |r|\leq a,\\
a, & r>a.
\end{array}\right.
\]
and use it in the ODE for $\Theta_i$ to enforce the bound on the speed of alignment: 
\begin{equation}
\label{eq:Theta2}
\frac{d\Theta_i}{dt}{}={} \gamma \left\llbracket \sum_{j\,:\,|X_j-X_i|<r}\sin(2(\Theta_j-\Theta_i))\right\rrbracket_{\frac{\pi r^2}{lw}}{}+{}\eta_i(t).
\end{equation}
Alternatively, the rate of alignment can be written using the mean nematic current at position $X_i:$
\[
\mathbf{J}_i = \sum_{j\,:\,|X_j-X_i|<r}e^{2\i\Theta_j}
\]
and the mean nematic director $\bar{\Theta}_i\in (-\pi/2,\pi/2]$ which is defined when $\mathbf{J}_i\not=0$ as
\[
\bar{\Theta}_i{}={}\frac{1}{2}\mbox{\rm Arg}\frac{\mathbf{J}_i}{|\mathbf{J}_i|}.
\]
In this way,
\[
\sum_{j\,:\,|X_j-X_i|<r}\sin(2(\Theta_j-\Theta_i)){}={}|\mathbf{J}_i|\sin(2(\bar{\Theta}_i-\Theta_i)).
\]
Thus, a different way to limit the rate of alignment is to set it to be equal to 
\begin{equation}
\label{eq:Theta3}
\gamma \left\llbracket|\mathbf{J}_i|\right\rrbracket_{\frac{\pi r^2}{lw}}\sin(2(\bar{\Theta}_i-\Theta_i)).
\end{equation}
In this paper, we will use the expression for the nematic alignment in the form appearing in \eqref{eq:Theta2}, but all our results also apply to the model based on \eqref{eq:Theta3}.

Denoting by $F(x,\theta,t)$ the distribution function for the number of cells in $(x,\theta)$ space, one obtains a kinetic equation for evolution of $F,$
\begin{equation}
\label{eq:FK1}
\partial_t F{}+{}v_0e(\theta)\cdot\grad F{}+{}\gamma\partial_{\theta}(F\left\lb V^r_F\right\rb_{\frac{\pi r^2}{lw}}){}-{}\mu_a\partial^2_{\theta\theta}F{}={}-\lambda F + \lambda \tau_\pi F,
\end{equation}
where
\[
V^r_F(x,\theta,t){}={} \int_{|y-x|<r}\int_\T F(y,\theta_1,t)\sin(2(\theta_1-\theta))\,d\theta_1dy,\quad \T=(-\pi,\pi),
\]
and $\mu_a{}={}\frac{\sigma^2}{2}.$ Equation \eqref{eq:FK1} (without reversals and limited alignment rate) was derived in \cite{Peruani2008}. We rescale variables with some length and time scales $L$ and $T:$
\[
x=L\xb,\quad t=T\tb,\quad F(x,\theta,t) = NL^{-2}\fb(\xb,\theta,\tb),
\]
where $\fb$ is a probability density function. We write equation \eqref{eq:FK1} in new non-dimensional variables and drop bars for brevity:
\begin{equation}
\label{eq:FK2}
\partial_t f{}+{}\frac{v_0T}{L}e(\theta)\cdot\grad f{}+{}(T\gamma)(\frac{N\pi r^2}{L^2})\partial_{\theta}(f\lb V^{\frac{r}{l}}_f\rb_{\frac{L^2}{Nlw}}){}-{}T\mu_a\partial^2_{\theta\theta}f
{}={}-T\lambda f + T\lambda \tau_\pi f,
\end{equation}
with
\[
V_f^{\frac{r}{L}}(x,\theta,t){}={} \dashint_{|y-x|<\frac{r}{L}}\int_\T f(y,\theta_1,t)\sin(2(\theta_1-\theta))\,d\theta_1dy,
\]
In the first asymptotic regime that we consider in this paper, we assume that $r/L= \e\ll 1,$ while considering all other coefficients to be of the finite order. In particular, this means that $N$ is large but $Nr^2L^{-2}$ is finite. For the bacterial motion, one typically has $r\sim l,$ and $w\sim 0.1l,$ 
 see \cite{ZHANG2018}. With these relations, the coefficient in the cutoff function $V_f,$ $L^2(Nlw)^{-1}$ is of the finite order.

For notational simplicity, we set all finite order coefficients  to be equal 1 and consider the equation
\begin{equation}
\label{eq:FK3}
\partial_t f{}+{}e(\theta)\cdot\grad f{}+{}\partial_{\theta}(f\lb V_f^\e\rb_1){}-{}\partial^2_{\theta\theta}f{}={}-f + \tau_\pi f,
\end{equation}
\begin{equation}
\label{def:Vf}
V_f^\e(x,\theta,t){}={} \dashint_{|y-x|<\e}\int_\T f(y,\theta_1,t)\sin(2(\theta_1-\theta))\,d\theta_1dy.
\end{equation}
The formal $\e\to0$ limit of this equation is the equation with the interactions localized to a point:
\begin{equation}
\label{def:vf}
V_f^0{}={}\int_\T f(x,\theta_1,t)\sin(2(\theta_1-\theta))\,d\theta_1,
\end{equation}  
We will use notation $v_f$ for $V_f^0.$


\subsection{Model II: macroscopic diffusion limit}
Now we return to PDE \eqref{eq:FK2} and consider a different asymptotic regime. Let $\tau = \frac{1}{\lambda}$ be the meantime traveled by a cell between reversals.
For the motion of myxobacteria cell, one typically has
\[
l \sim v_0\tau,
\]
see for example Cotter et al. \cite{Cotter17} and Murphy et al. \cite{Murphy23}.
Thus, the macroscopic limit $l/L=\e \ll1$ can be expressed as $v_0\tau/L\sim\e \ll 1.$ As in the previous model, we assume
\[
r\sim l,\quad \frac{Nlw}{L^2}\sim 1.
\]
Microscopic reversal-type motion, at the macroscopic level, is described as diffusion along the lines parallel to the cell orientation $e(\theta),$  with the diffusion coefficient
\[
d{}={}\frac{(\tau v_0)^2}{\tau}{}={}\tau v_0^2.
\]
To observe this diffusion at the macro-level we select time scale $T$ it must be selected such that 
\[
\frac{L^2}{T}{}={}d.
\]
 This implies that
 \[
 \frac{T}{\tau}{}\sim{}\frac{1}{\e^2}.
 \]
 Concerning the alignment and orientational rates $\gamma$ and $\mu,$ we assume that
 \[
 \gamma T\sim 1,\quad  \mu T\sim 1.
 \]
In other words, we are discussing a regime with fast reversals and slow alignment and orientation diffusion.
 Using the above relations in \eqref{eq:FK2} we arrive at the equation
 \begin{equation}
\label{eq:FK5}
\partial_tf +\frac{1}{\e}e(\theta)\cdot f{}+{}\partial_\theta(f\lb V_f^\e\rb_1) - \partial_{\theta\theta}^2f{}={}\frac{1}{\e^2}\left(
-f(x,\theta,t) + f(x,\theta+\pi,t)\right),
\end{equation}
with $V_f^\e$  as in \eqref{def:Vf}.
We will also need  equation for $f(x,\theta,t)+f(x,\theta+\pi,t):$
\begin{multline}
\label{eq:FK6}
\partial_t(f(\theta)+f(\theta+\pi)){}+{}\frac{1}{\e}e(\theta)\cdot\grad(f(\theta)-f(\theta+\pi))
{}+{}\partial_\theta((f(\theta)+f(\theta+\pi))\lb V_f^\e\rb_1) \\
{}-{}\partial_{\theta\theta}^2(f(\theta)+f(\theta+\pi)){}={}0.
\end{multline}
Consider now the singular limit of $\e\to0.$
Writing $f = f_0+\e f_1 +o(\e)$  we find from \eqref{eq:FK5} that $f_0(x,\theta+\pi,t) = f_0(x,\theta,t)$ and 
\[
f_1(x,\theta+\pi,t) - f_1(x,\theta,t) = e(\theta)\cdot\grad f_0.
\]
Using these relations in \eqref{eq:FK6}, we find the leading order equation for $f_0:$
\begin{equation}
\label{eq:FK7}
\partial_t f{}-{}\div \left(e(\theta)\otimes e(\theta)\grad f\right){}+{}\partial_\theta(f \lb v_f\rb_1) {}-{}\partial^2_{\theta\theta}f{}={}0.
\end{equation}

\subsection{Model III: uni-directional chemotaxis}

A model in which reversals  are biased by a chemical signal can be described by setting the rate of reversals $\lambda$ to depend on the chemoattractant concentration $S(x,t),$ for example, as in the following equation 
\[
\lambda = \lambda_0 -\beta\Phi(l^3_{ch} e(\theta)\cdot\grad S),
\]
where $\Phi$ is increasing function with $\Phi(0)=0,$ $\Phi'(0)=1,$  and $\sup_r|\Phi(r)|<\frac{\lambda_0}{\beta}.$ Here $\beta>0$ is the chemotaxis intensity (rate), and $l_{ch}$ is the characteristic length scale of signal sensing (distance over which the changes in $S$ are sensed by a cell). $\lambda_0$ is the unbiased reversal rate.

Switching to the non-dimensional variables
\begin{equation}
\label{def:s}
S(x,t) = NL^{-2}s(x/L,t/T),
\end{equation}

and 
\[
\lambda T = \lambda_0T -  \beta T\Phi(\frac{Nl_{ch}^3}{L^3}e(\theta)\cdot s).
\]
For the asymptotic regime, in addition to assumptions made in the previous model, we postulate
\[
\beta T\sim 
\lambda_0 T\sim \frac{1}{\e^2},
\quad l_{ch}\sim l.
\]
The last assumption implies that 
\[
\frac{Nl_{ch}^3}{L^3}{}\sim{}\frac{Nl^2}{L^2}\frac{l}{L}\sim \e.
\]
Setting all finite order coefficients to 1, for simplicity, we obtain the expression for the non-dimensional reversal rate
\[
T\lambda = \frac{1}{\e^2}{}-{}\frac{1}{\e}e(\theta)\cdot\grad s + O(1).
\]
Retaining only the first two terms, we arrive at the equation
\begin{align*}
\partial_t f +\frac{1}{\e}e(\theta)\cdot\grad f +\partial_\theta (f \lb V_f^\e\rb_1)  -
\partial^2_{\theta\theta} f{}={}& -\frac{1}{\e^2}(1-\e e(\theta)\cdot\grad s)f(\theta)
\\
&{}+{}\frac{1}{\e^2}(1+\e e(\theta)\cdot\grad s)f(\theta+\pi).
\end{align*} 
The formal asymptotic limit of $\e\to0$ leads to the following equation for $\pi$-periodic in $\theta$ function $f.$
\begin{align}
\label{eq:FK9}
&\partial_t f + \div(  2[e(\theta)\grad s] e(\theta) f){}-{}\div (e(\theta)\otimes e(\theta)\grad f){}+{}\partial_\theta(f \lb v_f\rb_1) {}-{}\partial^2_{\theta\theta}f{}={}0.
\end{align}
Here, $v_f$ as in \eqref{def:vf}.


Finally, we mention a coupled system of chemotaxis, when the chemoattractant is secreted by cells and spreads by diffusion
\begin{equation}
\label{eq:S}
\partial_t S - \mu_S\Delta S = \alpha n-\frac{1}{\tau_S}S,\quad n(x,t)= \int_{\T}F(x,\theta,t)\,d\theta,
\end{equation}
where $\mu_S,$ $\alpha$, and $\tau_S$ stay for the chemoattractant diffusion coefficient, the rate of production, and the characteristic decay time (relaxation time). Assuming that the coefficients verify relations
\[
\alpha T \sim 1, \quad \frac{T}{\tau_S}\sim 1, \quad  \frac{T\mu_S}{L^2}\sim 1,
\]
the non-dimensional concentration $s,$ defined in \eqref{def:s},
verifies equation
\begin{equation}
\label{eq:s}
\partial_t s - \Delta s =  n-s,\quad n(x,t)= \int_{\T}f(x,\theta,t)\,d\theta.
\end{equation}
We postpone to the future the analysis of the coupled system \eqref{eq:FK9}, \eqref{eq:s}.


\subsection{Existence theory: model I}

In our first theorem, we prove the existence of a weak solution to the initial-value problem for equation \eqref{eq:FK3}, \eqref{def:Vf} and then we pass to the limit of $\e\to0$ to obtain solutions with alignment rate \eqref{def:vf}.

All equations are considered on domain $\mathbb{R}^2\times\mathbb{R}\times(0,+\infty),$ with solutions $f$ being periodic over $\Pi=(0,1)\times(0,1)$ in spacial variable $x$ and over $\T{}={}(-\pi,\pi)$ in $\theta$ variable. Initial conditions are given as $f(x,\theta,0)=f_0(x,\theta).$  We will use notation $L^2_{x,\theta}$ for the space $L^2(\Pi\times\T),$ with the standard inner product $\langle\cdot,\cdot\rangle_{L^2_{x,\theta}},$ and $H^1_{x,\theta}$ for $H^1(\Pi\times\T).$
Let $Q_T=\Pi\times\T\times(0,T).$

We will work with weak solutions that verify
the inequality 
\begin{equation}
\label{eq:energy1}
\sup_{[0,T]}\|f(\cdot,t)\|^2_{L^2_{x,\theta}} {}+{}\|\partial_\theta f\|^2_{L^2(0,T;L^2_{x,\theta})}{}+{}
\|f(\theta) - f(\theta+\pi)\|^2_{L^2(0,T;L^2_{x,\theta})}{}\leq{}2e^{T}\|f_0\|^2_{L^2_{x,\theta}},
\end{equation}
obtained by integrating the equation against solution $f.$

\begin{theorem}
\label{th:1}
 Let $f_0\in L^2_{x,\theta}\cap L^1_{x,\theta},$ periodic in $(x,\theta),$ $f_0\geq0$ a.e. and have unit mass.
There is a unique, global in time, periodic, weak solution of \eqref{eq:FK3}. The solution is non-negative, has unit mass for all $t>0,$ and such that for any $T>0,$
\begin{align}
\label{th1:1}
&f{}\in{} L^\infty(0,T;L^2_{x,t})\cap C([0,T]; L^2_{x,\theta}-weak)\\
\label{th1:2}
&\partial_\theta f \in L^2(0,T;L^2_{x,\theta})\\
\label{th1:3}
&\partial_t \langle f,w\rangle_{L^2_{x,\theta}} \in L^2(0,T),\quad \forall w\in H^{1}_{x,\theta},
\end{align}
and \eqref{eq:energy1} holds.
\end{theorem}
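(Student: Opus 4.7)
The plan is to construct weak solutions by a Schauder fixed-point argument that exploits the smoothing produced by the spatial mollification in $V^\e_f$. Set $a_f \defin \llbracket V^\e_f\rrbracket_1$; two properties are crucial: (i) $\|a_f\|_{L^\infty(Q_T)}\leq 1$ uniformly in $f$, by the cutoff; and (ii) $f\mapsto V^\e_f$ is, for each $\theta$, the convolution in $x$ of $f$ (weighted by the bounded smooth factor $\sin(2(\theta_1-\theta))$ in $\theta_1$) with the fixed normalized indicator $\frac{1}{|B_\e|}\mathbf{1}_{|y-x|<\e}\in L^2$, hence a compact operator from bounded subsets of $L^2_{x,\theta}$ into $C^0(\Pi\times\T)$. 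These two facts together make the nonlinearity $\partial_\theta(f a_f)$ amenable to weak-compactness methods despite the equation providing no $x$-regularization for $f$ itself.

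\textbf{Fixed point and energy estimate.} Given $g$ in a closed bounded set of $L^2(Q_T)$, define $T(g)=f$ as the solution of the linear equation
\[
\partial_t f + e(\theta)\cd\grad f + \partial_\theta(f\,a_g) - \partial^2_{\theta\theta}f + f - \tau_\pi f = 0, \qquad f(\cdot,0)=f_0,
\]
which has a smooth, uniformly bounded coefficient $a_g$ and a bounded non-local source $\tau_\pi f$; standard parabolic/kinetic theory supplies a unique solution $f\geq 0$ in the energy class (non-negativity via Stampacchia truncation applied to $(f)^-$, whose equation closes favorably). Testing with $f$, using $\int e(\theta)\cd\grad f\cdot f=0$ by periodicity, integration by parts in $\theta$ with $\|a_g\|_\infty\leq 1$, and the reversal identity
\[
\int_{\Pi\times\T}(-f + \tau_\pi f)\,f\,dx\,d\theta = -\tfrac{1}{2}\|f-\tau_\pi f\|^2_{L^2_{x,\theta}},
\]
followed by Gronwall, yields \eqref{eq:energy1} uniformly in $g$, so $T$ preserves the natural energy ball. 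Continuity of $T$ and compactness of the mollification $g\mapsto a_g$ then enable Schauder's theorem to produce a fixed point, i.e., a weak solution of \eqref{eq:FK3}. Mass conservation is a direct consequence of integrating the equation and using periodicity of every term.

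\textbf{Uniqueness.} If $f_1,f_2$ are two solutions and $w=f_1-f_2$, the nonlinear contribution in the equation for $w$ factors as $\partial_\theta\bigl(w\,a_{f_1} + f_2(a_{f_1}-a_{f_2})\bigr)$. The Lipschitz property of $\llbracket\cd\rrbracket_1$ together with Cauchy-Schwarz applied to the mollification yields the bound
\[
\|a_{f_1}-a_{f_2}\|_{L^\infty_{x,\theta}}(t) \leq C_\e\,\|w(\cdot,t)\|_{L^2_{x,\theta}}.
\]
Testing the $w$-equation with $w$, absorbing $\|\partial_\theta w\|^2_{L^2_{x,\theta}}$ into the $\theta$-diffusion, and invoking Gronwall with the uniform $L^\infty(L^2)$ bound on $f_2$ forces $w\equiv 0$.

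\textbf{Main obstacle.} The most delicate step is the compactness of $T$, since there is no $x$-smoothing of $f$ itself (the transport $e(\theta)\cd\grad f$ is hyperbolic) and approximate sequences cannot be expected to be compact in $x$. The resolution is that the nonlinearity depends on $f$ only through the mollified $a_f$; for a sequence $f_n\rightharpoonup f$ weakly in $L^2$, one upgrades to uniform convergence $a_{f_n}\to a_f$ using equicontinuity from $L^2$-translation continuity of the kernel, plus a $\partial_t$ bound on $V^\e_{f_n}$ derived from the equation (Aubin-Lions in time with values in $C^0(\Pi\times\T)$). The weak-strong lemma then yields $f_n a_{f_n}\rightharpoonup f a_f$ in $L^2$, sufficient to pass to the limit in $\partial_\theta(f_n a_{f_n})$ distributionally. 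This elementary mollification-based compactness has no counterpart in the point-interaction limit $\e\to 0$, which is what motivates the kinetic averaging machinery introduced later in the paper.
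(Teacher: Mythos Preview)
Your overall ingredients---the uniform energy bound \eqref{eq:energy1}, the observation that the $x$-mollification in $V^\e_f$ upgrades weak $L^2$ convergence of $f$ to pointwise/uniform convergence of $V^\e_f$ in $(x,\theta)$, and the Lipschitz uniqueness argument---are exactly the ones the paper uses. The paper, however, organizes them differently: it first adds an artificial spatial diffusion $\delta\Delta_x f$, so that a genuine parabolic fixed-point produces solutions $f_\delta$, and only then passes $\delta\to 0$ using the mollification compactness on $V^\e_{f_\delta}$. Your proposal tries to bypass the $\delta$-regularization and run Schauder directly on \eqref{eq:FK3}.

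As written, that direct Schauder step has a gap. You set $T:g\mapsto f$ on a ball in $L^2(Q_T)$ and claim compactness via ``compactness of the mollification $g\mapsto a_g$''. But the mollification acts only in $x$ (and the $\theta_1$-integral gives a trigonometric polynomial in $\theta$); it does \emph{nothing} in $t$. For a generic bounded sequence $g_n$ in $L^2(Q_T)$ you therefore only get $a_{g_n}\rightharpoonup a_g$ weakly in time, not strongly, and the product $f_n\,a_{g_n}$ does not pass to the limit. Your ``Main obstacle'' paragraph invokes a $\partial_t$ bound on $V^\e_{f_n}$ ``derived from the equation'', but in the Schauder framework the inputs $g_n$ are arbitrary elements of the ball and satisfy no equation; the time regularity belongs to $T(g_n)$, not to $g_n$. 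Equivalently, $T$ is not compact on $L^2(Q_T)$ because $f=T(g)$ has no $x$-regularity whatsoever.

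The fix is minor but should be made explicit: run the fixed point on the coefficient rather than on $f$. Define $S:a\mapsto a_{f(a)}$ on, say, the unit ball of $L^\infty(Q_T)$ (or of $C([0,T];C(\Pi\times\T))$), where $f(a)$ solves the linear problem with drift $a$. Now $f(a)$ does satisfy the equation, so $\partial_t\langle f(a),w\rangle\in L^2(0,T)$ uniformly, and Aubin--Lions gives $f(a_n)\to f$ in $C([0,T];L^2_{x,\theta}\text{-weak})$; the mollification then yields $a_{f(a_n)}\to a_f$ uniformly, and $S$ is compact. This is a genuine (and slightly more direct) alternative to the paper's $\delta\Delta_x$ route, but the compactness must be placed on the composed map $a\mapsto f(a)\mapsto a_{f(a)}$, not on $g\mapsto a_g$ alone.
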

\begin{proof}
The weak solution can be constructed by a parabolic approximation, by adding  $x$-diffusion $\delta\Delta_xf$ to the equation and passing to the limit $\delta\to0.$ A theory of linear parabolic equations and a fixed point argument can be used to obtain such a solution (we omit the details).
Solutions, call them $f_\delta,$ verify estimate \eqref{eq:energy1} and thus, have a subsequence $f_{\delta_k}$ that converges to some 
$f$ -- the element of spaces listed in \eqref{th1:1}--\eqref{th1:3}, in the following topologies:
\begin{align}
&f_{\delta_k} \to f\quad  \mbox{\rm weakly in}\; L^2(0,T;L^2_{x,\theta})\\
&\partial_\theta f_{\delta_k}\to \partial_\theta f \quad  \mbox{\rm weakly in}\; L^2(0,T;L^2_{x,\theta})\\
&\langle f_{\delta_k},w\rangle_{L^2_{x,\theta}} \to \langle f,w\rangle_{L^2_{x,\theta}} \quad\mbox{\rm in}\; C([0,T]),\quad \forall w\in L^2_{x,\theta}
\end{align}
The only nonlinear term in the equation \eqref{eq:FK3} is operator
\begin{equation}
\label{def:opA}
A(f) = \partial_\theta(f \lb V_f^\e\rb_1) = \partial_\theta f\lb V_f^\e\rb_1
{}-{}2f\dashint_{|y-x|<\e}\int_\T f(y,\theta_1,t)\cos(2(\theta_1-\theta))\,d\theta_1dy\chi_{\{ |V_f^\e(x,\theta,t)|\leq 1\}}.
\end{equation}
Given the above compactness properties of $f_{\delta_k},$ $V^\e_{f_{\delta_k}}$ and the other integral term in \eqref{def:opA} converge uniformly on compact subset in $(x,\theta,t)$ domain, and consequently,
\[
A(f_{\delta_k})\to A(f),\quad \mbox{\rm weakly in}\quad L^2(0,T;L^2_{x,\theta}).
\]

For the uniqueness, given $f_1$ and $f_2$ in the regularity class of the theorem, we can estimate for any $t>0,$
\begin{equation*}
\|A(f_1(t))-A(f_2(t))\|_{L^2_{x,\theta}}{}\leq C\|\partial_\theta f_1(t)-\partial_\theta f_2(t)\|_{L^2_{x,\theta}}{}+{}
C_\e\left(\|f_1(t)\|_{L^2_{x,\theta}}
{}+{}\|\partial_\theta f_1(t)\|_{L^2_{x,\theta}}\right)\|f_1(t)-f_2(t)\|_{L^2_{x,\theta}},
\end{equation*}
for some $C>0,$ and $C_\e>0.$ The uniqueness then follows from  the ``energy" inequality for $f_1-f_2$ and the Gronwall's lemma.
\end{proof}

\begin{theorem}
\label{th:2}
Let $f_\e$ be the family solutions \eqref{eq:FK3} from theorem \ref{th:1} corresponding to initial data $f_0,$ as in theorem \ref{th:1}.
The set of moments
\[
\left\{\int_\T f_{\e} \phi(\theta)\,d\theta\right\} \quad \mbox{\rm is pre-compact in  } L^2_{loc}(\mathbb{R}^2\times(0,+\infty)), \quad \forall \phi\in C^1(\overline{\T}).
\]
On a suitable subsequence $\e_n\to0$, $f_{\e_n}$ converges to a weak solution \eqref{eq:FK3}, \eqref{def:vf}, $f,$ with properties \eqref{th1:1}--\eqref{th1:3}.
\end{theorem}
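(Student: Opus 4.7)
The plan is to combine the uniform a priori bound \eqref{eq:energy1} with the $L^2$ kinetic averaging lemma of Gérard applied to the transport operator $\partial_t + e(\theta)\cdot\grad$, and then to pass to the limit in the truncated alignment term using the Lipschitz boundedness of $\lb\cdot\rb_1$. First I would rewrite equation \eqref{eq:FK3} for $f_\e$ in the form
\[
\partial_t f_\e + e(\theta)\cdot\grad f_\e = g_\e + \partial_\theta h_\e,
\]
with $g_\e = -f_\e + \tau_\pi f_\e$ and $h_\e = \partial_\theta f_\e - f_\e\lb V_{f_\e}^\e\rb_1$. By \eqref{eq:energy1} and the pointwise bound $|\lb V_{f_\e}^\e\rb_1|\leq 1$, both $g_\e$ and $h_\e$ are uniformly bounded in $L^2(0,T;L^2_{x,\theta})$, while $f_\e$ itself is bounded in $L^\infty(0,T;L^2_{x,\theta})$. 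Since $e(\theta)=(\cos\theta,\sin\theta)$ satisfies the standard non-degeneracy condition --- $|\{\theta\in\T:|\tau + \xi\cdot e(\theta)|<\delta\}|=O(\delta)$ uniformly in $(\tau,\xi)$ on the unit sphere --- Gérard's averaging lemma yields precompactness of
\[
\int_\T f_\e(x,\theta,t)\,\phi(\theta)\,d\theta
\]
in $L^2_{loc}(\mathbb{R}^2\times(0,\infty))$ for every $\phi\in C^1(\overline{\T})$, which is the first assertion of the theorem.

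Next, using the expansion $\sin(2(\theta_1-\theta))=\cos(2\theta)\sin(2\theta_1)-\sin(2\theta)\cos(2\theta_1)$ and applying the above with $\phi(\theta_1)=\sin(2\theta_1)$ and $\phi(\theta_1)=\cos(2\theta_1)$, I deduce that $v_{f_\e}$ is precompact in $L^2_{loc}(\mathbb{R}^2\times\T\times(0,\infty))$. Extracting a subsequence $\e_n\to 0$, I may assume $v_{f_{\e_n}}\to v_f$ strongly in $L^2_{loc}$ and $f_{\e_n}\rightharpoonup f$, $\partial_\theta f_{\e_n}\rightharpoonup\partial_\theta f$ weakly in $L^2(0,T;L^2_{x,\theta})$. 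The spatial average $V_{f_\e}^\e$ is a convolution of $v_{f_\e}(\cdot,\theta,t)$ with a normalized indicator of a ball of radius $\e$; a standard mollifier argument then gives $V_{f_{\e_n}}^{\e_n}\to v_f$ strongly in $L^2_{loc}$. Since $\lb\cdot\rb_1$ is Lipschitz and uniformly bounded, $\lb V_{f_{\e_n}}^{\e_n}\rb_1\to\lb v_f\rb_1$ strongly in $L^p_{loc}$ for every $p<\infty$.

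With the coefficient $\lb V_{f_{\e_n}}^{\e_n}\rb_1$ converging strongly and both $f_{\e_n}$ and $\partial_\theta f_{\e_n}$ converging weakly in $L^2$, the nonlinear product $\partial_\theta(f_{\e_n}\lb V_{f_{\e_n}}^{\e_n}\rb_1)$ passes to $\partial_\theta(f\lb v_f\rb_1)$ in the sense of distributions; all remaining terms pass to their weak limits directly. Thus $f$ is a weak solution of \eqref{eq:FK3} with the localized alignment $\lb v_f\rb_1$, and the regularity \eqref{th1:1}--\eqref{th1:3} together with \eqref{eq:energy1} is preserved under the weak limit by standard lower-semicontinuity arguments.

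The hard part will be the first step: carefully fitting the right-hand side of \eqref{eq:FK3} into the hypotheses of Gérard's lemma. Each piece of the source is either in $L^2_{t,x,\theta}$ or a single $\theta$-derivative of an $L^2_{t,x,\theta}$ function --- a structure covered by the $L^2$ averaging framework because $\partial_\theta$ is tangential to the velocity variable and therefore admissible --- but the non-degeneracy of $e(\theta)$ on $\T$ and the uniform-in-$\e$ control of the source terms have to be tracked carefully. Once the moment compactness is in place, the remainder of the argument is a soft weak-strong compactness passage to the limit.
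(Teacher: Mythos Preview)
Your proof is correct and follows essentially the same route as the paper: rewrite \eqref{eq:FK3} as a transport equation with an $L^2$ source (plus $\theta$-derivative of an $L^2$ function), invoke Gérard's averaging lemma via the non-degeneracy of $e(\theta)$ to obtain $L^2_{loc}$ compactness of the moments, and then pass to the limit in the truncated alignment term by a weak--strong argument. The only cosmetic difference is that the paper integrates the source against $\phi$ first and uses the $H^{-1}_{\loc}$-precompactness formulation of the lemma, whereas you keep the $g_\e+\partial_\theta h_\e$ structure; one minor slip is that the uniform non-degeneracy bound is $O(\delta^{1/2})$ rather than $O(\delta)$ (because of tangential level sets of $\tau+\xi\cdot e(\theta)$), but only the qualitative measure-zero condition is needed for the compactness conclusion.
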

\begin{proof}
We start with weak solutions $f_{\e}$ of \eqref{eq:FK3} that verify the energy inequality \eqref{eq:energy1}.
Given the energy estimates, we only need to establish the strong compactness property of moments $V_{f_\e}^\e$, for a solution to exist. For that, we use a version of the result of Gerard \cite{Gerard1991} from Bouchut et al \cite{BouchutBook}, Theorem 1.5. We write \eqref{eq:FK3} as  a transport equation
\[
\partial_t f_{\e}{}+{}e(\theta)\cdot\grad f_{\e}{}={}-f_\e(\theta)+f_\e(\theta+\pi) - \partial_\theta(f \lb V^{\e}_{f_{\e}}\rb_1)+\partial^2_{\theta\theta}f_{\e}:=g_{\e}.
\]
 For $\phi\in C^1([-\pi,\pi]),$ one has $\int_{-\pi}^\pi g_{\e}\phi\,d\theta$ bounded in $L^2(0,T;L^2(\mathbb{R}^2))$ uniformly in $\e.$ This implies that
 \[
 \left\{ \int_\T g_{\e}\phi\,d\theta\right\} \quad \mbox{\rm is pre-compact in }\; H^{-1}_{loc}(\mathbb{R}^2\times\mathbb{R}_+).
 \]
 Moreover, $f_{\e}$ are bounded in $L^2(0,T;L^2_{x,v})$ for any $T>0,$ and the transport velocity verifies the property
\[
\mbox{\rm meas}\left\{ \theta\,:\, e(\theta)\cdot \xi {}={}u\right\}{}={}0,\quad \forall \xi\in \mathbb{R}^2,\,|\xi|=1,\;\forall u\in\mathbb{R}.
\]
These properties imply the compactness of the moments of $f_{\e}$ stated in the theorem.

Based on that, there a sequence $\e_n$ and $f$ such that for any $T>0,$ $f_{\e_n}\to f$ in the weak topology of $L^2(0,T;L^2_{x,v}),$ and 
\[
\int_\T f_{\e_n}(x,\theta_1,t)\sin(2(\theta_1-\theta))\,d\theta_1 \to
\int_\T f(x,\theta_1,t)\sin(2(\theta_1-\theta))\,d\theta_1
\]
a.e. $(x,\theta,t)$ and in $L^2_{loc}(Q_T).$ A subsequence can be found such that,
\[
V^{\e_n}_{f_{\e_n}}{}={}\dashint_{|y-x|<\e_n}\int_\T f_{\e_n}(y,\theta_1,t)\sin(2(\theta_1-\theta))\,d\theta_1dy\to \int_\T f(y,\theta_1,t)\sin(2(\theta_1-\theta))\,d\theta_1=v_f,
\]
a.e. $(x,\theta,t)$ and in $L^2_{loc}(Q_T).$ With this, one can pass to the limit in the equation \eqref{eq:FK3} to obtain the equation with the localized alignment rate \eqref{def:vf}.
  \end{proof}

\subsection{Existence theory: model II}
Solutions to this model will be obtained from a model with added diffusion in $x$-direction,
\begin{equation}
\label{eq:FK8}
\partial_t f{}-{}\div \left(e(\theta)\otimes e(\theta)\grad f\right){}+{}\partial_\theta(f \lb v_f\rb_1) {}-{}\partial^2_{\theta\theta}f-\e\Delta f{}={}0,
\end{equation}
in the zero limit of the diffusion coefficient $\e.$ For the solutions of the later equation, one has the energy inequality
\begin{equation}
\label{eq:energy2}
\sup_{[0,T]}\|f(\cdot,t)\|^2_{L^2_{x,\theta}} {}+{}\|\partial_\theta f\|^2_{L^2(0,T;L^2_{x,\theta})}{}+{}\| e(\theta)\cdot\grad f\|^2_{L^2(0,T;L^2_{x,\theta})}
{}+{}\e\|\grad f\|^2_{L^2(0,T;L^2_{x,\theta})}
{}\leq{}2e^{T}\|f_0\|^2_{L^2_{x,\theta}}.
\end{equation}
We will assume that weak solutions with this regularity exist for any non-negative $f_0\in L^2_{x,v}\cap L^1_{x,v},$ $\Pi$-periodic in $x$ and $\pi$-periodic in $\theta,$ and with properties \eqref{th1:1}-\eqref{th1:3}. This can be proved by the argument parallel to that of the proof of Theorem \ref{th:1}. We omit the proof.

Our main result is the following variant of the averaging lemma proved in \cite{Golse1988}.
\begin{lemma}
\label{lemma:average}
 Let $f_\e$ be a weak solution of \eqref{eq:FK8} with $\e\in(0,1],$ with initial data $f_0.$ Let $u_\e(x,t) = \psi(x,t)f_\e,$ for $\psi\in C^\infty_0(\mathbb{R}^2\times\mathbb{R}_+),$ and be extended by zero to all of $\mathbb{R}^3.$
Denote  a $\theta$-moment function
\[
\rho^\phi_\e{}={}\int_\T u_\e \phi(\theta)\,d\theta,\quad \phi\in C^1(\overline{\T}).
\]
Then, 
$
\rho^\phi_\e \in {H}^{\frac{2}{7},\frac{1}{7}}(\mathbb{R}^2\times\mathbb{R}),
$
and 
\[
\|\rho^\phi_\e\|_{{H}^{\frac{2}{7},\frac{1}{7}}}{}\leq{} C_{\psi,\phi}\|f_0\|_{L^2_{x,\theta}},
\]
for some $C_{\psi,\phi},$ independent of $\e.$ In particular,  set $\{\rho^\phi_\e\}_{\e\in(0,1]}$ is pre-compact in $L^2(\mathbb{R}^3).$
\end{lemma}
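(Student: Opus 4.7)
The plan is to adapt the velocity-averaging scheme of Golse--Lions--Perthame--Sentis, with the first-order transport symbol $v\cdot\xi$ replaced by the degenerate second-order symbol $(e(\theta)\cdot\xi)^2+\e|\xi|^2$. First I write the PDE satisfied by the localized function $u_\e=\psi f_\e$. Using that $\psi$ is $\theta$-independent (so it commutes with $\partial^2_{\theta\theta}$) and computing the commutators of $\psi$ with the $x$-differential operators in \eqref{eq:FK8}, I rewrite the equation as
\[
\partial_t u_\e - \div\bigl(e(\theta)\otimes e(\theta)\grad u_\e\bigr) - \partial^2_{\theta\theta} u_\e - \e\Delta u_\e = g_0^\e + \partial_\theta g_1^\e,
\]
where $g_0^\e$ collects the commutator terms and $g_1^\e$ encodes the alignment nonlinearity $\psi f_\e\lb v_{f_\e}\rb_1$. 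Because $\lb v_{f_\e}\rb_1$ is bounded by $1$ and the energy estimate \eqref{eq:energy2} is uniform in $\e$, both $g_0^\e$ and $g_1^\e$ are uniformly bounded in $L^2(\R^2\times\T\times\R)$, and so are $\partial_\theta u_\e$, $e(\theta)\cdot\grad u_\e$ and $\sqrt\e\,\grad u_\e$.

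\textbf{Fourier analysis and a resonant/non-resonant split.} Taking the Fourier transform in $(x,t)$, the equation becomes the $\theta$-ODE
\[
P(\xi,\tau,\theta)\,\hat u_\e - \partial^2_{\theta\theta}\hat u_\e = \hat g_0^\e + \partial_\theta \hat g_1^\e,\qquad P=i\tau + (e(\theta)\cdot\xi)^2 + \e|\xi|^2.
\]
For a parameter $\delta>0$ (to be optimized pointwise in $(\xi,\tau)$) I pick a smooth cutoff $\chi_\delta$ of the sub-level set $\{|P|\le\delta\}$ and split
\[
\hat\rho^\phi_\e(\xi,\tau) = \int_\T\chi_\delta\,\hat u_\e\,\phi\, d\theta + \int_\T(1-\chi_\delta)\hat u_\e\,\phi\, d\theta.
\]
On the resonant piece, Cauchy--Schwarz bounds the integral by $|\{|P|\le\delta\}|^{1/2}\|\hat u_\e(\xi,\cdot,\tau)\|_{L^2(\T)}$. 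On the non-resonant piece, I divide by $P$ and integrate by parts in $\theta$ once (no boundary term by periodicity) so that $\partial^2_{\theta\theta}\hat u_\e$ is transferred onto $(1-\chi_\delta)\phi/P$, producing an integrand that pairs $\partial_\theta\hat u_\e$ (controlled in $L^2$ by the energy estimate) against $\partial_\theta\!\bigl[(1-\chi_\delta)\phi/P\bigr]$; the $\hat g_0^\e$ and $\partial_\theta\hat g_1^\e$ sources are handled analogously.

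\textbf{Weight estimates, optimization and conclusion.} The resonant set satisfies $|\{\theta\in\T:|P|\le\delta\}|\lesssim\min(1,\sqrt\delta/|\xi|)$ when $|\tau|\le\delta$ and is empty otherwise, because $|P|^2=\tau^2+((e\cdot\xi)^2+\e|\xi|^2)^2$ is small only when both $|\tau|$ is small and $e(\theta)\cdot\xi$ is close to $0$. For the non-resonant weight, the key algebraic ingredient is
\[
|\partial_\theta P|^2 = 4(e(\theta)\cdot\xi)^2\bigl((1+\e)|\xi|^2 - (e(\theta)\cdot\xi)^2\bigr),
\]
which vanishes at the same locus as $P$ and prevents $\partial_\theta(1/P)=-\partial_\theta P/P^2$ from blowing up too rapidly. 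The substitution $r=(e\cdot\xi)^2+\e|\xi|^2$ reduces the $\theta$-integrals to explicit one-dimensional integrals and yields, for $|\xi|$ large,
\[
\int_\T \frac{d\theta}{|P|^2}\,\mathbf{1}_{\{|P|>\delta\}}\lesssim\frac{1}{\delta^{3/2}},\qquad \int_\T\frac{|\partial_\theta P|^2}{|P|^4}\,d\theta\lesssim\frac{|\xi|}{\delta^{5/2}}.
\]
Choosing $\delta=\delta(\xi,\tau)$ to balance these contributions against the resonant one and then applying Plancherel together with Hölder in $(\xi,\tau)$ produces the anisotropic $H^{2/7,1/7}$ bound, the ratio $1:2$ between the time and space exponents being forced by the heat-type scaling $|\tau|\sim|\xi|^2$ of the principal symbol. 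Pre-compactness in $L^2_{loc}(\R^3)$ then follows from Rellich's theorem since $u_\e$ (and hence $\rho^\phi_\e$) is compactly supported.

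\textbf{Main obstacle.} The principal new difficulty, compared to the classical first-order averaging lemma, is the presence of $-\partial^2_{\theta\theta}u_\e$ on the left-hand side: the energy estimate controls $\partial_\theta u_\e$ in $L^2$ but not $\partial^2_{\theta\theta}u_\e$. Transferring one $\theta$-derivative via integration by parts forces differentiation of $1/P$, which naively would blow up at the resonance $\{e(\theta)\cdot\xi=0\}$. The algebraic cancellation of $|\partial_\theta P|^2$ at that resonance, together with the explicit one-dimensional reduction of the weighted $\theta$-integrals under the substitution $r=(e\cdot\xi)^2+\e|\xi|^2$, is what lets the averaging argument go through for this uni-directional diffusion operator. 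All constants must be tracked independently of $\e\in(0,1]$ so that the gained regularity survives in the vanishing-viscosity limit used to construct weak solutions to \eqref{intro:FK2}.
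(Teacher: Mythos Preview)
Your overall strategy---Fourier transform in $(x,t)$, a resonant/non-resonant decomposition governed by a parameter $\delta$, integration by parts in $\theta$ to handle the $\partial_\theta$-sources, and optimization of $\delta$ pointwise in $(\xi,\tau)$---is the same as the paper's. Two choices in your execution differ from the paper's, and one of them leaves a genuine gap.

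First, the paper does not keep $-\partial^2_{\theta\theta}u_\e$ on the left-hand side. It simply moves it to the right as a source of the form $\partial_\theta q_2$ with $q_2=\partial_\theta u_\e-u_\e\lb v_{f_\e}\rb_1\in L^2$ (by the energy estimate \eqref{eq:energy2}), so that after Fourier transform one has the \emph{algebraic} relation $(-i\tau+(e(\theta)\cdot\xi)^2+\e|\xi|^2)\hat u=\hat q_1+\partial_\theta\hat q_2$, not a $\theta$-ODE. Your integration by parts on $\partial^2_{\theta\theta}\hat u/P$ is mathematically equivalent to this, so what you call the ``main obstacle'' is not really an obstacle: one derivative in $\theta$ is absorbed into the source from the outset. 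Second, and more consequential, the paper's cutoff is based on $\{\theta:|\cos(\theta_1-\theta)|<\delta\}$ (with $\xi=|\xi|e(\theta_1)$), not on the full symbol $\{|P|\le\delta\}$. This choice makes the resonant measure uniformly $O(\delta)$ in $(\xi,\tau)$, and on the non-resonant set gives the clean lower bound $|P|^2\ge\tau^2+\delta^4|\xi|^4$. Combined with the pointwise inequality $|\xi|^4\cos^2(\theta_1-\theta)\le|\xi|^2|P|$, the paper obtains directly
\[
\Bigl|\int_\T\hat u\,\phi\,d\theta\Bigr|^2\;\le\;C\delta\int_\T|\hat u|^2\,d\theta\;+\;C\,\delta^{-6}(\tau^2+|\xi|^4)^{-1}\int_\T(|\hat q_1|^2+|\hat q_2|^2)\,d\theta,
\]
and then $\delta=(\tau^2+|\xi|^4)^{-1/7}$ yields the $H^{2/7,1/7}$ bound. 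In your version the resonant set is empty when $|\tau|>\delta$ and otherwise has measure $\sim\sqrt{\delta}/|\xi|$; your non-resonant weight estimates $\int|P|^{-2}\lesssim\delta^{-3/2}$ and $\int|\partial_\theta P|^2|P|^{-4}\lesssim|\xi|\delta^{-5/2}$ suppress the actual $\tau$-dependence of these integrals, and a straightforward balancing of $\sqrt{\delta}/|\xi|$ against $|\xi|\delta^{-5/2}$ gives $\delta\sim|\xi|^{2/3}$, which does not display the anisotropic $(\tau^2+|\xi|^4)^{1/7}$ weight at all. As written, your optimization step does not establish the stated $H^{2/7,1/7}$ regularity; to close the argument you either need to track the joint $(\xi,\tau)$-dependence of your weight integrals much more carefully, or---simpler---switch to the paper's angular cutoff $|\cos(\theta_1-\theta)|<\delta$.
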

\begin{remark}
${H}^{\frac{2}{7},\frac{1}{7}}$ estimate for $\rho^\phi_\e$  of the lemma equally applies to a family of solutions of \eqref{eq:FK7}  verifying the energy balance \eqref{eq:energy2}.
\end{remark}
\begin{proof}
Let $\psi$ and $u_\e$ be as in the statement of the lemma. We will write $u$ for $u_\e$ to simplify the notation. $u$ verifies equation
\begin{equation}
\partial_t u{}-{}\div \left(D(\theta)\grad u\right){}-\e\Delta u{}={}	q_1 + \partial_\theta q_2 ,
\end{equation}
where
\[
q_1{}={}-(e(\theta)\cdot\grad f_\e)e(\theta)\cdot\grad\psi
+2\e\grad f_\e\cdot\grad u + \e f_\e\Delta\psi,
\]
\[
q_2{}={}-(u \lb v_{f_\e}\rb_1) {}+{}\partial_{\theta}u,
\]

with $q_1,$ $q_2$ uniformly in $\e$ bounded in $L^2(\mathbb{R}^2\times\T\times\mathbb{R})$ due to \eqref{eq:energy2}.
For function $\hat{u}(\xi,\theta,\tau)$ -- the Fourier transform of $u$ in $(x,t),$ we get
\[
(-i\tau + (e(\theta)\cdot\xi)^2 +\e|\xi|^2)\hat{u}{}={}
 \hat{q}_1+\partial_\theta \hat{q}_2.
\]
Writing $\xi = |\xi|e(\theta_1),$ for the polar angle $\theta_1,$ we can express
$e(\theta)\cdot\xi{}={}|\xi|\cos(\theta_1-\theta).$
Let $\omega(\theta)$ be a smooth cutoff function for the set 
\[
\{\theta\,:\, |\cos(\theta_1-\theta)|<\delta\},
\]
for small $\delta>0,$ that is,
\[
\omega(\theta){}={}\left\{
\begin{array}{ll}
0, & |\cos(\theta_1-\theta)|\leq\delta/2\\
1, & |\cos(\theta_1-\theta)|\geq\delta
\end{array}
\right.
\]
with $\max_\theta|\omega| = 1,$ and $\max_{\theta}|\partial_\theta \omega| \leq C\delta^{-1}.$
Then, we can write
\begin{align*}
\left( \int_{\T} \hat{u}\phi\,d\theta\right)^2
{}\leq{}&
2C\left(
\int_{\mathbb{T}}|\hat{u}| |1-\omega|\,d\theta
\right)^2
{}+{}
2\left(
\int_{\mathbb{T}}\frac{(\hat{q}_1+\partial_\theta \hat{q}_2)\phi(\theta)\omega(\theta)\,d\theta}{-i\tau + (|\xi|\cos(\theta_1-\theta))^2 +\e|\xi|^2}
\right)^2\\
{}\leq{}&
2C\delta
\int_{\mathbb{T}}|\hat{u}|^2 \,d\theta
{}+{}
C\int_{\T}\frac{|\hat{q}_1|^2\,d\theta}{|-i\tau + \delta^2|\xi|^2+\e|\xi|^2|^2}\\
&{}+{}
C\int_{\T}\frac{|\hat{q}_2|^2 |\partial_\theta(\phi\omega)|^2  \,d\theta}{|-i\tau + \delta^2|\xi|^2+\e|\xi|^2|^2}{}+{}
C\int_{\T}\frac{|\hat{q}_2|^2 |\xi|^4\cos^2(\theta_1-\theta)   \,d\theta}{|-i\tau + (|\xi|\cos(\theta_1-\theta))^2+\e|\xi|^2|^4}\\
{}\leq{}&
2C\delta
\int_{\mathbb{T}}|\hat{u}|^2 \,d\theta
{}+{}
C\delta^{-2}\frac{1}{(\tau^2 + \delta^4|\xi|^4)}\int_\T |\hat{q}_1|^2+|\hat{q}_2|^2\,d\theta
\\
&{}+{}
C\frac{|\xi|^2}{(\tau^2 + \delta^4|\xi|^4)^{3/2}}\int_\T |\hat{q}_2|^2\,d\theta\\
{}\leq{}&
2C\delta
\int_{\mathbb{T}}|\hat{u}|^2 \,d\theta
{}+{}
C\delta^{-6}\frac{1}{(\tau^2 + |\xi|^4)}\int_\T |\hat{q}_1|^2+|\hat{q}_2|^2\,d\theta.
\end{align*}
Choosing 
\[
\delta = (\tau^2 + |\xi|^4)^{1/7},
\]
we find that  
\[
(\tau^2 + |\xi|^4)^{1/7}\left( \int_{\T} \hat{u}\phi\,d\theta\right)^2{}\leq{}
C\left( 
\int_{\mathbb{T}}|\hat{u}|^2 {}+{} |\hat{q}_1|^2+|\hat{q}_2|^2\,d\theta
\right),
\]
with $C$ independent of $\e.$ We also have
\[
\left( \int_{\T} \hat{u}\phi\,d\theta\right)^2{}\leq{}
C 
\int_{\mathbb{T}}|\hat{u}|^2\,d\theta.
\]
Then, we obtain the estimate of the lemma, by integrating the last two inequalities in $(\xi,\tau)$ and using the Parseval's inequality.

\end{proof}

With this lemma, we can now prove 
\begin{theorem}
\label{th:3}
 Let $f_0\in L^2_{x,\theta}\cap L^1_{x,\theta},$ $f_0\geq0$ a.e., $\pi$-periodic in $\theta$ and have unit mass.
There is a global in time, weak solution of \eqref{eq:FK7}. The solution is non-negative, $\pi$-periodic in $\theta,$ has unit mass for all $t>0,$  and such that for any $T>0,$ \eqref{eq:energy2} and \eqref{th1:1}--\eqref{th1:3} hold.
\end{theorem}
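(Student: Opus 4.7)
My plan is to construct $f$ as the vanishing-viscosity limit, as $\e\to 0$, of a sequence $\{f_\e\}$ of non-negative, $\pi$-periodic weak solutions of the parabolically regularized equation \eqref{eq:FK8}. The existence of such $f_\e$ verifying the uniform energy estimate \eqref{eq:energy2} is assumed right before Lemma \ref{lemma:average} by appealing to an argument parallel to Theorem \ref{th:1}. From \eqref{eq:energy2} I extract a subsequence, not relabeled, along which $f_\e \rightharpoonup f$ weakly-$*$ in $L^\infty(0,T;L^2_{x,\theta})$, with $\partial_\theta f_\e \rightharpoonup \partial_\theta f$ and $e(\theta)\cdot\nabla f_\e \rightharpoonup e(\theta)\cdot\nabla f$ weakly in $L^2(0,T;L^2_{x,\theta})$; moreover $\e\Delta f_\e \to 0$ in $\mathcal{D}'$ because $\sqrt{\e}\,\nabla f_\e$ stays bounded in $L^2$. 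The $\pi$-periodicity in $\theta$, non-negativity, and the unit-mass property pass to the weak limit.

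The crux is the passage to the limit in the non-linear term $\partial_\theta (f_\e \lb v_{f_\e} \rb_1)$. Using the product-to-sum identity $\sin(2(\theta_1-\theta)) = \sin(2\theta_1)\cos(2\theta) - \cos(2\theta_1)\sin(2\theta)$, I decompose
$$v_{f_\e}(x,\theta,t) = \cos(2\theta)\int_\T f_\e(x,\theta_1,t) \sin(2\theta_1)\,d\theta_1 {}-{} \sin(2\theta)\int_\T f_\e(x,\theta_1,t)\cos(2\theta_1)\,d\theta_1.$$
The two inner moments fall in the scope of Lemma \ref{lemma:average} with $\phi(\theta)=\sin(2\theta)$ and $\cos(2\theta)$, respectively. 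For any cutoff $\psi\in C^\infty_0(\mathbb{R}^2\times\mathbb{R}_+)$, the products of $\psi$ with these two moments lie uniformly in $H^{2/7,1/7}(\mathbb{R}^2\times\mathbb{R})$ and hence are pre-compact in $L^2$. Exhausting $\mathbb{R}^2\times\mathbb{R}_+$ by such cutoffs and passing to a diagonal subsequence, both moments converge strongly in $L^2_{loc}(\mathbb{R}^2\times\mathbb{R}_+)$ and, after one more extraction, a.e. Consequently $v_{f_\e}\to v_f$ in $L^2_{loc}(Q_T)$ and a.e., and because $\lb\cdot\rb_1$ is Lipschitz and bounded by $1$, the same holds for $\lb v_{f_\e}\rb_1\to\lb v_f\rb_1$, with a uniform $L^\infty$ bound.

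A weak-times-strong argument then yields $f_\e\,\lb v_{f_\e}\rb_1 \rightharpoonup f\,\lb v_f\rb_1$ in $L^2_{loc}$, so $\partial_\theta(f_\e \lb v_{f_\e}\rb_1)\to \partial_\theta (f\lb v_f\rb_1)$ in $\mathcal{D}'$. All other terms in \eqref{eq:FK8} pass to the limit by linearity and weak convergence. Weak lower semicontinuity transfers the energy estimate \eqref{eq:energy2} to $f$, which in turn delivers \eqref{th1:1}--\eqref{th1:3}; the attainment of the initial data follows from the $C([0,T];L^2_{x,\theta}\text{-weak})$ regularity in the standard way.

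The principal obstacle, as expected, is the strong compactness of the $\theta$-moments. Since the active operator of \eqref{eq:FK7} is the degenerate diffusion $\div(e(\theta)\otimes e(\theta)\nabla f)$ rather than free transport, Gerard's classical averaging lemma used in Theorem \ref{th:2} does not apply directly. Lemma \ref{lemma:average} is precisely the device that resolves this, exploiting in Fourier the fact that $(e(\theta)\cdot\xi)^2$ vanishes only on a small set of $\theta$ for each fixed $\xi$, the exact analogue of the transversality condition in the classical setting.
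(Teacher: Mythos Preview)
Your argument is correct and follows the same vanishing-viscosity scheme as the paper: approximate by solutions $f_\e$ of \eqref{eq:FK8}, use the uniform energy bound \eqref{eq:energy2} to extract weak limits, and invoke Lemma \ref{lemma:average} to upgrade the $\theta$-moments to strong $L^2_{loc}$ (hence a.e.) convergence so that the nonlinear term $\partial_\theta(f_\e\lb v_{f_\e}\rb_1)$ passes to the limit. Your explicit trigonometric decomposition of $v_{f_\e}$ into two $\theta$-independent moments and the weak-times-strong product argument simply spell out steps the paper leaves implicit.
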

\begin{remark}
The same result applies to equation \eqref{eq:FK9} with given chemoattractant concentration $s,$ when $s\in L^\infty(0,T;W^{1,\infty}(\Pi)).$
\end{remark}
\begin{proof}
We start with solutions $f_\e$ of \eqref{eq:FK8} with initial data $f_0,$ which verify the energy inequality \eqref{eq:energy2} and properties \eqref{th1:1}--\eqref{th1:3}.
A sequence $f_{\e_n},$ $\e_n\to0$ can be found such that $f_{\e_n}\to f$ in $
C([0,T]; L^2_{x,\theta}-weak).$ To pass to the limit in the nonlinear term of the equation, we use lemma \ref{lemma:average}. According to it, there is a subsequence, that we still label $\e_n,$ such that $v_{f_{\e_n}}$ is pre-compact in $L^2_{loc}(\mathbb{R}^2\times\mathbb{R}_+)$ and converges to $v_f.$ A further subsequence can be extracted such that 
$v_{f_{\e_n}}\to v_f$ a.e. $(x,t).$ With that, one can pass to the limit in the alignment term of the equation. 
\end{proof}

\bibliographystyle{plain}
\bibliography{refs}

\end{document}